\newcommand{\kt}{k \llbracket t \rrbracket}
\newcommand{\kbt}{\bar{k} \llbracket t \rrbracket}
\newcommand{\OKt}{\mathcal{O}_{K} \llbracket t  \rrbracket}
\newcommand{\OK}{\mathcal{O}_{K}}
\newcommand{\OL}{\mathcal{O}_{L}}
\newcommand{\vertIII}[1]{{\left\vert\kern-0.25ex\left\vert\kern-0.25ex\left\vert #1
    \right\vert\kern-0.25ex\right\vert\kern-0.25ex\right\vert}}
\newcommand{\softO}{\tilde O}
\newcommand{\cM}{\text{\rm M}}
\newtheorem{defi}{Definition}
\newtheorem{thm}[defi]{Theorem}
\newtheorem*{thm*}{Theorem}
\newtheorem{prop}[defi]{Proposition}
\theoremstyle{remark}
\newcommand{\newabstract}[1]{%
  \par\bigskip
  \csname captions#1\endcsname
  \item[\hskip\labelsep\scshape Disclaimer.]
}
\title{Computing isogenies between Jacobians of hyperelliptic curves of arbitrary genus via differential equations }
\author[Eid]{Elie Eid}
\address{%
  Elie Eid, %
  Univ. Rennes, %
  CNRS, IRMAR - UMR 6625, F-35000
  Rennes, %
  France. %
}
\email{elie.eid@univ-rennes1.fr}
\begin{document}

\maketitle

\begin{abstract}
Let $p$ be an odd prime number and $\ell$ be an integer coprime to $p$. We survey an algorithm for computing explicit rational representations of $(\ell , \ldots , \ell)$-isogenies between Jacobians of
  hyperelliptic curves of arbitrary genus over an extension $K$ of the field of
  $p$-adic numbers $\mathbb{Q}_p$. The algorithm has a quasi-linear complexity in $\ell$ as well as in the genus of the curves.

\end{abstract}

\section{Introduction}
\label{sec:introduction}
Over the last few years there has been a growing interest in computational aspects of abelian varieties, especially Jacobians of algebraic curves. When such a variety is given, a first task is to compute the number of points on it in some finite field~\cite{Schoof95,ballentine17}. A way to achieve this efficiently is to work with isogenies. In addition to point counting, the computation of isogenies has many applications in number theory and cryptography~\cite{CEL12,CL13,EVYan19,costello20}.  \\
In order to have optimal algorithms for computing isogenies, in particular those which are defined over finite fields, several approaches have been suggested. One of them consists in reducing the problem to the computation of a solution of a nonlinear differential equation~\cite{Elkies97,couezo15}, possibly after having lifted the problem to the $p$-adics~\cite{lesi08,lava16,careidler20,eid20}.
In this work, we focus on $p$-adic algorithms that compute the explicit form of a rational representation of an isogeny between Jacobians of hyperelliptic curves for fields of odd characteristic. 

\medskip

Let $k$ be a field of characteristic different from $2$ and $\ell >1$ and $g >1$ two integers. Let $C$ (resp. $C_1$) be a genus $g$ hyperelliptic curve over $k$ and let $J$ (resp. $J_1$) be its Jacobian. We assume that there exists a separable $(\ell , \ldots , \ell)$-isogeny $I: J_1 \rightarrow J$ defined over $k$ and we are interested in computing one of its rational representations. Let us recall briefly the definition of a rational representation and how we compute it (see~\cite{eid20} for more details). Let $P$ be a Weierstrass point on $C_1$ and $j_P : C_1 \rightarrow J_1$ the Jacobi map with origin $P$. The morphism $I \circ j_p$ induces a morphism $I_P : C_1 \rightarrow C^{(g)}$, where $C^{(g)}$ is the $g$-th symmetric power of $C$. When a coordinate system for $C_1$ and $C$ is fixed, the morphism $I_P$ is given by its Mumford representation, which consists of a pair of polynomials $(U(z), V(z))$ with the following property: if $I_P(Q)=\{ R_1 , \ldots, R_g\}$ (for some $Q \in C_1$), then $U(x({R_i})) = 0$ and $V(x({R_i}))=y({R_i})$, for all $i=1,\ldots, g$. 
Here $x(R_i)$ and $y(R_i)$ denote the coordinates of the point $R_i$.
The $2g$ coefficients of the two polynomials $U$ and $V$ can be represented as rational fractions over $k$ in one variable and they form what we call a rational representation of $I$. \\ We assume that $C$ (resp. $C_1$) is given by the affine model $C: y^2 = f(x)$ (resp. $C_1: v^2 = f_1(u)$). Let $Q$ be a non-Weierstrass point on $C_1$ such that $I_P (Q)=\{(x_1^{(0)}, y_1^{(0)}) , \ldots , (x_g^{(0)}, y_g^{(0)}) \}$ contains $g$ distinct points and does not contain neither a point at infinity nor a Weierstrass point. Let $t$ be a formal parameter of $C_1$ at $Q$ and let $\{(x_1(t), y_1(t)) , \ldots , (x_g(t), y_g(t)) \}$ be the image of $Q(t)$ by $I_P$. The action of $I_P$ on the spaces of holomorphic differentials of $C_1$ and $C^{(g)}$ gives the following differential system whose unknown is $X(t) = (x_1(t) , \ldots , x_g(t)) \in \kbt $. 
\begin{equation}
\label{eq:mainsystem}
\left\{ 
\begin{array}{l}
H(X(t)) \cdot  X'(t) = G(t)\\
y_i(t)^2 = f(x_i(t)), \, i= 1, \ldots , g \\
X(0) = (x_1^{(0)} , \cdots , x_g^{(0)}) \\
Y(0) = (y_1^{(0)} , \cdots , y_g^{(0)})
\end{array} 
\right. 
\end{equation}
where $G(t) = (G_1(t) , \ldots , G_g(t)) \in \kt ^g $ and $H(X(t))$ is the matrix defined by 

\begin{equation}
\label{eq:MatrixH}
H(x_1(t),\ldots, x_g(t)) = 
\begin{pmatrix}
x_1(t)/y_1(t) & x_2(t)/y_2(t)& \cdots & x_g(t)/y_g(t) \\

x_1(t)^2 /y_1(t) & x_2(t)^2 /y_2(t) & &  x_g(t)^2 /y_g(t) \\

\vdots & &  & \vdots \\

x_1^{g-1}(t)/y_1(t) & x_2(t)^{g-1}/y_1(t) & \cdots & x_g(t)^{g-1}/y_g(t)
\end{pmatrix}
\end{equation}
$ $ \\
Since the coefficients of $U(z)$ are rational fractions of degree at most $O(g \ell)$~\cite[Proposition~9]{eid20}, solving Equation~\eqref{eq:mainsystem} modulo $t^{O(g \ell)}$ allows to reconstruct all the components of the rational representation (note that the polynomial $V(z)$ can be recovered using the polynomial $U(z)$ and the equation of $C$). \\
Let $p$ be an odd prime number. We assume that $k$ is a finite field of characteristic $p$. Let $K$ be an unramified extension of $\mathbb{Q}_p$ such that the residue field of $K$ is $k$. 
In~\cite{eid20}, we have designed an algorithm that computes, after lifting 
Equation~\eqref{eq:mainsystem} over $K$, an approximation of its solution. 
This algorithm is based on the following Newton iteration:
\begin{equation}
\label{eq:Newtoniteration}
X_{2m}(t) = X_m(t) + H(X_m(t))^{-1} \int (G - H(X_m(t))\cdot X'_m(t)) \, dt
\end{equation}
which gives more and more accurate (for the $t$-adic distance) solutions 
of Equation~\eqref{eq:mainsystem}.
The complexity of this algorithm is quasi-linear with respect to $\ell$
but, unfortunately, it is at least quadratic in $g$ (even if we note that the matrix $H(x_1(t), \ldots , x_n(t))$ is a 
structured matrix). The main reason for this lack of efficiency is due to 
the fact that the components of the solution $X(t)$ of 
Equation~\eqref{eq:mainsystem} are power series 
over an unramified extension of degree $g$ of $K$. However, the 
rational fractions of the rational representation are defined over the 
ring of integers $\OK$ of $K$. This is where we loose an extra factor~$g$.

In this article, we revisit the algorithm of~\cite{eid20} and manage to 
lower its complexity in $g$ and make it quasi-linear as well. For this, 
we work directly on the first Mumford coordinate $U(z) = \prod \limits 
_{i=1}^g (z - x_i(t))$ which has the decisive advantage to be defined 
over the base field: 
we rewrite the Newton scheme~\eqref{eq:Newtoniteration} accordingly 
and design fast algorithms for iterating it in quasi-linear time.
Our main theorem is the following

\begin{thm}
\label{thm:main}
Let $K$ be an unramified extension of $\mathbb{Q}_p$ and $k$ its residue field. There exists an algorithm that takes  as input: 
\begin{itemize}
\item three positive integers $g$,$n$ and $N$,
\item A polynomial $f \in \OK[z]$ of degree $2g+1$, 
\item a vector $X_0 = (x_1^{(0)}, \ldots , x_g^{(0)})$ represented by the polynomial $U_0(z) = \prod \limits _{i=1}^g (z - x_i^{(0)}) \in \OK [z]$ such that, over $k$, $U_0(z)$ is separable,
\item a vector  $Y_0 = (y_1^{(0)}, \ldots , y_g^{(0)})$ represented by the interpolating polynomial $V_0(z) \in \OK [z]$ of the data $\{ (x_1^{(0)}, y_1^{(0)})  , \ldots , (x_g^{(0)}, y_g^{(0)})\}$,  
\item a vector $G(t) \in \OKt ^g$, 
\end{itemize}
and, assuming that the solution of Equation~\eqref{eq:mainsystem} has coefficients in $\OL$ with $L$ an unramified extension of $K$, outputs a polynomial $U(t,z) = \prod \limits _{i=1}^g (z - x_i(t)) \in \OKt [z]$ such that $X(t)= (x_1(t) , \ldots , x_g(t))$ is an approximation of this solution modulo $(p^N , t^{n+1})$ for a cost $\softO (ng)$ operations\footnote{The notation $\softO(-)$ means that we are hiding logarithmic factors.} in $\OK$ at precision $ O(p^M)$ with $M= {N+\lfloor \log _p (n)\rfloor}$.
\end{thm}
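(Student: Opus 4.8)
The plan is to track the Newton iteration~\eqref{eq:Newtoniteration} entirely through the symmetric function $U_m(t,z) = \prod_{i=1}^g(z - x_{i,m}(t))$ rather than through the vector $X_m(t)$ itself, and to verify that every arithmetic operation needed to pass from $U_m$ to $U_{2m}$ can be carried out in $\softO(ng)$ operations in $\OK$ at the claimed precision. First I would set up the translation dictionary: the elementary symmetric functions of the $x_i(t)$ are the coefficients of $U(t,z)$, so knowledge of $U_m$ modulo $t^{n+1}$ is equivalent to knowledge of $X_m$ up to permutation; the key point is that power sums $p_k(t) = \sum_i x_{i,m}(t)^k$, the values $\sum_i x_{i,m}(t)^j / y_{i,m}(t)$ appearing in the matrix $H$, and the interpolating polynomial $V_m(t,z)$ with $V_m(t,x_{i,m}(t)) = y_{i,m}(t)$ can all be recovered from $U_m$ (together with $f$) by Newton's identities, power-series inversion, and fast multipoint-evaluation/interpolation-type manipulations done symbolically on coefficients — each costing $\softO(g)$ operations in $\OKt$ truncated modulo $t^{n+1}$, hence $\softO(ng)$ operations in $\OK$.

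Next I would rewrite the Newton update as an update on $U$. Writing $U(t,z) = \prod_i (z - x_i(t))$, its logarithmic derivative in $t$ gives $\partial_t U / U = -\sum_i x_i'(t)/(z - x_i(t))$, so the vector $X'(t) = (x_1'(t),\ldots,x_g'(t))$ is encoded by the polynomial $\partial_t U(t,z)$ evaluated (formally) against the partial-fraction basis attached to $U$. The linear system $H(X_m)\cdot X_{2m}' = G$ for the correction, together with the explicit Vandermonde-like structure of $H$ in~\eqref{eq:MatrixH}, means that solving it amounts to a transposed multipoint-evaluation problem: inverting $H(X_m)$ applied to a vector is, up to the diagonal factors $1/y_{i,m}$, a Vandermonde solve, which by the standard fast algorithms costs $\softO(g)$ ring operations per coefficient of $t$, i.e. $\softO(ng)$ in total. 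The integral $\int(\,\cdot\,)\,dt$ is coefficientwise division by integers $\le n$, which is where the precision loss $\lfloor \log_p n\rfloor$ enters: to have the result correct modulo $p^N$ after dividing by a factor that is a product of integers up to $n$, one must carry everything modulo $p^M$ with $M = N + \lfloor \log_p n\rfloor$, since $\upsilon_p(k!)\le \lfloor \log_p n\rfloor$-type bounds control the $p$-adic denominators introduced by integration up to degree $n$. Finally I would reassemble $U_{2m}$ from the updated power sums (equivalently, from $X_m + (\text{correction})$) via Newton's identities again.

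The correctness of the iteration — that $X_m(t) \equiv X(t) \pmod{t^{m+1}}$ doubles to $X_{2m}(t)\equiv X(t)\pmod{t^{2m+1}}$ — is inherited from~\cite{eid20} and from the quadratic convergence of~\eqref{eq:Newtoniteration}; I would only need to check that working with $U$ in place of $X$ preserves it, which is immediate because the map $X \mapsto U$ is a bijection on unordered tuples and is compatible with $t$-adic truncation, together with the hypothesis that $U_0$ is separable over $k$ (so that $H(X_0)$ is invertible over $k$, hence over $\OK$, and the matrix $H(X_m)$ stays invertible throughout). The hypothesis that the true solution has coefficients in $\OL$ for an unramified $L$ guarantees no extra ramification is introduced, so all divisions by $p$-units stay within $\OK$-arithmetic and the only denominators are the integer ones from integration, already absorbed into $M$.

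The main obstacle I expect is the fast linear-algebra step: proving that inverting the structured matrix $H(X_m(t))$ — whose entries are themselves truncated power series over an unramified extension — and applying it to $G - H(X_m)\cdot X_m'$ can be done in $\softO(ng)$ operations in $\OK$, rather than in $\OL$. This requires doing the Vandermonde/transposed-evaluation manipulations purely in terms of the coefficients of $U_m(t,z)$ (which live in $\OK$) without ever splitting $U_m$ into its roots $x_{i,m}(t)$ (which live in the degree-$g$ extension), i.e. one must implement the classical ``evaluation–interpolation'' subroutines in a ``root-free'' symbolic form on the characteristic polynomial $U_m$. Handling the partial-fraction decomposition $1/U_m(t,z) = \sum_i 1/\big(U_m'(t,x_{i,m})\,(z - x_{i,m})\big)$ symbolically, and showing the resulting algorithm remains quasi-linear and numerically stable at precision $p^M$, is the technical heart of the argument; the precision bookkeeping for the integration step is routine by comparison.
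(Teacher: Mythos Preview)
Your outline has the right skeleton and you correctly isolate the crux: all manipulations must be carried out on $U_m(t,z)\in\OK[t][z]$ without ever passing to the roots $x_{i,m}\in\OL$. You also handle the precision bookkeeping for the integration step correctly. But there is a genuine gap in your final move, ``reassemble $U_{2m}$ from the updated power sums via Newton's identities.'' Suppose you have succeeded in expressing the correction symbolically as $\delta_i=\Delta(t,x_{i,m})$ for some $\Delta\in\OK[t][z]/(U_m)$; then $x_{i,2m}=T_m(t,x_{i,m})$ with $T_m(z)=z+\Delta(z)$, and what you need is $U_{2m}(t,z)=\prod_i\bigl(z-T_m(t,x_{i,m})\bigr)=\mathrm{Res}_w\bigl(U_m(t,w),\,z-T_m(t,w)\bigr)$. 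Recovering this via Newton's identities would require the power sums $p_k^{(2m)}=\sum_i T_m(x_{i,m})^k$ for $k=1,\dots,g$, i.e.\ the traces of $T_m^k$ in $\OK[t][z]/(U_m)$; computing all of these naively is $g$ modular multiplications of cost $\softO(g)$ each, hence quadratic in $g$. Getting this step down to quasi-linear is precisely a power-projection/modular-composition problem, and the paper closes it by invoking the Kedlaya--Umans algorithm. Without that ingredient (or an equivalent fast modular composition), your reassembly step does not run in $\softO(ng)$.

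The paper's route also differs from yours one step earlier. Rather than inverting $H(X_m)$ via a symbolic Vandermonde solve as you propose, the paper never inverts $H$ at all: it observes that the vector $H(X_m)X_{2m}$ is directly accessible as $H(X_m)X_m+\int(G-H(X_m)X_m')\,dt$, and that both $H(X_m)X_m$ and $H(X_m)X_m'$ are \emph{Hankel} matrix-vector products of the Newton sums $s_k=\sum_i x_{i,m}^k$ (resp.\ their $t$-derivatives) against the coefficients of the polynomial $W_m\in\OK[t][z]$ interpolating the data $(x_{i,m},1/y_{i,m})$. This yields $H(X_m)X_{2m}\in\OK[t]^g$ with no inversion; from there a Kaltofen--Shoup transposed-interpolation trick recovers the polynomial $T_m$ with $T_m(x_{i,m})=x_{i,2m}$, and Kedlaya--Umans produces $U_{2m}$. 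So the symbolic Vandermonde inversion you flag as the ``main obstacle'' is actually sidestepped; the real bottleneck is the minimal-polynomial/resultant step you did not mention.
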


Important examples of isogenies are, of course, the 
multiplication-by-$\ell$ maps. Classical algorithms for computing them 
are usually based on Cantor algorithm for adding points on Jacobians 
(see for example \cite{Cantor,abela18}).
Although, they exhibit acceptable running time in practice, their 
theoretical complexity has not been well studied yet and experiments 
show that they become much slower when the genus gets higher. 
Actually, in many cases, we have observed that the algorithms 
of~\cite{eid20} perform better in practice even if their theoretical
complexity in $g$ is not optimal.
Consequently, even though the algorithms designed in the present paper 
use Kedlaya-Umans algorithm~\cite{KU11} as a subroutine and then could be difficult 
to implement in an optimized way, they appear as attractive alternatives 
for the computation of $\ell$-division polynomials on Jacobians of 
hyperelliptic curves.

\section{The main result}
In this section, we sketch the proof of the main theorem by showing that the Newton iteration given in Equation~\eqref{eq:Newtoniteration} can be executed with quasi-linear time complexity to give the desired polynomial in Theorem~\ref{thm:main}. The precision analysis has been already studied in~\cite{eid20}.  \\
Throughout this section, the letter $p$ refers to a fixed odd prime number and the letter $K$ refers to a fixed unramified extension of $\mathbb{Q}_p$ of degree $d$ and $k$ its residue field. Let $\OK$ be the ring of integers of $K$. \\
We use \textit{the fixed point arithmetic model} at precision $O(p^M)$ to do computations in $\OK$ by representing an element in $\OK$ by an expression of the form $x + O(p^M)$ with $x \in \OK/p^M \OK$. For instance, if $d=1$, the quotient $\OK/p^M \OK$ is just $\mathbb{Z}/p^M \mathbb{Z}$. Additions, multiplications and divisions in this model all reduce to the similar operations in the exact quotient ring $\OK/p^M \OK$. \\
Let $\cM(m)$ be the number of arithmetical operations required to compute the product of two polynomials of degree $m$ in an exact ring. Standard algorithms allow us to take $\cM(m) \in \softO (m)$.\\
Let $g>1$ be an integer and let $G(t) \in \OKt$. Let also $f$ be a polynomial of degree $2g+1$ and let $U_0(z)\in \OK [z]$ be a polynomial of degree $g$ which separable over $k$. For the sake of simplicity, we assume that $U_0$ is irreducible, therefore its splitting field $L$ is an unramified extension of degree $g$ of $K$. Let $x_1^{(0)}, \ldots , x_g^{(0)}$ be the roots of $U_0(z)$ in $L$ and $X_0 = (x_1^{(0)}, \ldots , x_g^{(0)})$. For $i = 1, \ldots, g$, we assume that $f(x_i^{(0)})$ has a square root $y_i^{(0)}$ in $\OL$. Take $Y_0 = (y_1^{(0)}, \ldots , y_g^{(0)})$ and let $V_0(z)\in \OK [z]$ be the interpolating polynomial of the data $\{ (x_1^{(0)}, y_1^{(0)})  , \ldots , (x_g^{(0)}, y_g^{(0)})\}$. We assume that the unique solution $X(t)=(x_1(t) , \ldots , x_n(t))$ of Equation~\eqref{eq:mainsystem} has coefficients in $\OL$ when $X_0$ and $Y_0$ are the initial conditions. \\
Let $m \in \mathbb{N}$ and $n=2m$. Let $X_m(t)=(x_1^{(m)}(t), \ldots , x_g^{(m)}(t))$ be an approximation of $X(t)$ modulo $t^m$ represented by the minimal polynomial of $x_1^{(m)}$, $U_m(t,z)= \prod \limits (z- x_i^{(m)}(t))$. We show in the next proposition that we can compute efficiently an approximation $X_n(t)=(x_1^{(n)}(t), \ldots , x_g^{(n)}(t))$ of $X(t)$ modulo $t^n$ represented by the minimal polynomial $U_n(t,z)$ of $x_1^{(n)}(t)$ using Equation~\eqref{eq:Newtoniteration}.

\begin{prop}
\label{prop:complexity}
Using the same notations as above, there exists an algorithm that computes $U_n(t,z)$ from $U_m(t,z)$ with time complexity $\softO(mg)$.
\end{prop}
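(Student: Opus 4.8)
The plan is to execute a single step of the Newton iteration~\eqref{eq:Newtoniteration} entirely at the level of the symmetric polynomial $U_m(t,z)$, without ever manipulating the individual coordinates $x_i^{(m)}(t)\in\OL$. Every quantity below will be a polynomial in $z$ of degree $O(g)$ whose coefficients are power series truncated modulo $t^{n}$ (recall $n=2m$); since a product of two such polynomials costs $\cM(g)=\softO(g)$ operations in $\OKt/t^{n}$, and since one operation there costs $\cM(n)=\softO(m)$ operations in $\OK$, it suffices to exhibit a bounded number of polynomial multiplications, Euclidean divisions and modular inversions realising the step, which then gives the bound $\softO(mg)$.

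Alongside $U_m$ I would keep the polynomial $V_m(t,z)$ of degree $<g$ with $V_m(t,x_i^{(m)})=y_i^{(m)}$; equivalently $V_m$ is the square root of $f$ modulo $U_m$ reducing to $V_0$ modulo $(p,t)$, and it is recovered from $U_m$ by a short Hensel iteration in $\softO(mg)$. Because $U_0$ is separable over $k$, because no $y_i^{(0)}$ vanishes, and assuming, as we may, that $U_0(0)$ is a unit, each of $U_m'(t,z)$, $z$ and $V_m(t,z)$ is invertible modulo $U_m$ in $(\OKt/t^{n})[z]$, with inverse computed in $\softO(mg)$ by lifting the Bézout relation valid modulo $(p,t)$. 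With these preliminaries, I would compute the vector $B(t)=H(X_m(t))\cdot X_m'(t)$, whose $k$-th entry is the weighted power sum $B_k=\sum_{i=1}^{g}(x_i^{(m)})^{k}(x_i^{(m)})'/y_i^{(m)}$. Writing $P(t,z)=-\partial_t U_m(t,z)$ (of degree $<g$), one has $\sum_i(x_i^{(m)})'/(z-x_i^{(m)})=P/U_m$, hence $(x_i^{(m)})'=P(x_i^{(m)})/U_m'(x_i^{(m)})$; multiplying this partial-fraction identity by $V_m^{-1}\bmod U_m$ and reducing the numerator modulo $U_m$ yields
\[
\sum_{i=1}^{g}\frac{(x_i^{(m)})'/y_i^{(m)}}{z-x_i^{(m)}}=\frac{N_0(t,z)}{U_m(t,z)},\qquad N_0:=P\cdot\bigl(V_m^{-1}\bmod U_m\bigr)\bmod U_m ,
\]
and expanding the right-hand side at $z=\infty$ — one reversal of $N_0$ and $U_m$, followed by one power-series division to precision $g+1$ — returns the coefficients $B_0,B_1,\dots,B_g$ of $z^{-1},z^{-2},\dots,z^{-g-1}$, hence the entries $B_1,\dots,B_g$ of $B$. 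Then $w(t)=\int_0^t\bigl(G(s)-B(s)\bigr)\,ds$ is formed componentwise; since $X_m\equiv X\pmod{t^{m}}$ the integrand vanishes modulo $t^{m-1}$, so $w\equiv 0\bmod t^{m}$. The cost so far is $\softO(mg)$.

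It remains to apply $H(X_m)^{-1}$ and to re-symmetrise. Factor $H(X_m)=\widetilde V\cdot\operatorname{diag}\bigl(x_i^{(m)}/y_i^{(m)}\bigr)_i$, with $\widetilde V$ the ordinary Vandermonde matrix of the $x_i^{(m)}$; using the classical description of $\widetilde V^{-1}$ via the Lagrange polynomials $L_i(z)=U_m(z)/\bigl((z-x_i^{(m)})U_m'(x_i^{(m)})\bigr)$, the correction $\delta=H(X_m)^{-1}w$ satisfies $\delta_i=(y_i^{(m)}/x_i^{(m)})\sum_{k=1}^{g}w_k\,[z^{k-1}]L_i(z)$. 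The coefficientwise pairing $\sum_k w_k[z^{k-1}]L_i(z)$ is turned into an evaluation by a transposed-product (reversal) identity: with $\widetilde W(z)=\sum_{k=1}^{g}w_k z^{g-k}$ and $\Phi:=\widetilde W\cdot U_m$, which vanishes at every $x_i^{(m)}$, one obtains $\sum_k w_k[z^{k-1}]L_i(z)=\Phi_{\ge g}(x_i^{(m)})/U_m'(x_i^{(m)})$, where $\Phi_{\ge g}$ is the polynomial quotient of $\Phi$ upon division by $z^{g}$ and has degree $<g$. Since $\delta_i=O(t^{m})$ and $n=2m$, the product defining $U_n$ linearises,
\[
U_n(t,z)=\prod_{i=1}^{g}\bigl(z-x_i^{(m)}-\delta_i\bigr)\equiv U_m(t,z)-\sum_{i=1}^{g}\delta_i\,\frac{U_m(t,z)}{z-x_i^{(m)}}\pmod{t^{n}} ,
\]
and the last sum is a Lagrange interpolant: substituting the expression for $\delta_i$ gives $\sum_i\delta_i\,U_m/(z-x_i^{(m)})=\theta(t,z)$ with $\theta:=\bigl(V_m\cdot\Phi_{\ge g}\cdot(z^{-1}\bmod U_m)\bigr)\bmod U_m$ of degree $<g$ (two polynomials of degree $<g$ agreeing at the $g$ distinct values $x_i^{(m)}$ coincide). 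Hence $U_n(t,z)=U_m(t,z)-\theta(t,z)$, which a bounded number of degree-$O(g)$ polynomial multiplications and reductions over $\OKt/t^{n}$ compute in $\softO(mg)$ operations in $\OK$; the auxiliary polynomial $V_n$ is then refreshed from $U_n$ within the same bound.

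The core of the argument, and the step I expect to be the main obstacle, is exactly this translation: a priori every object in~\eqref{eq:Newtoniteration} lives over the degree-$g$ extension $L$, and the proposition asserts that forming $H(X_m)X_m'$, inverting $H(X_m)$, and folding the updated roots back into $U_n$ can each be done with $O(1)$ polynomial operations over the base ring $\OKt/t^{n}$, by systematically exploiting the (confluent) Vandermonde structure together with reversal/transposed-evaluation identities, while the square roots $y_i^{(m)}$ are absorbed symmetrically through $V_m=\sqrt f\bmod U_m$ and its inverse. (If instead one carries out the step through multipoint evaluation at the roots $x_i^{(m)}$, the Kedlaya--Umans modular-composition routine~\cite{KU11} provides the analogous quasi-linear building block.) The precision bookkeeping — why correctness modulo $t^{m}$ of the input forces correctness modulo $t^{n}$ of the output, and the required $p$-adic precision $O(p^{M})$ — is not reproduced here, being already settled in~\cite{eid20}.
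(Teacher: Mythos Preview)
Your argument is correct, but it differs from the paper's in one essential point. Both you and the paper compute $V_m$ by a square-root Hensel lift and both realise the application of $H(X_m)^{-1}$ through the same transposed-product identity (your $\Phi_{\ge g}$ is exactly the paper's polynomial $Q_m$ in steps~5--6). The divergence is in the final step: the paper first forms $F=H(X_m)X_m+\int(G-H(X_m)X_m')$, applies $H^{-1}$ to obtain a polynomial $T_m$ with $T_m(x_i^{(m)})=x_i^{(n)}$, and then recovers $U_n$ as the minimal polynomial of $T_m$ modulo $U_m$, which is where Kedlaya--Umans enters. You instead apply $H^{-1}$ only to the increment $w=\int(G-H(X_m)X_m')$, observe that the resulting $\delta_i$ lie in $t^m\OL\llbracket t\rrbracket$, and use this to linearise $\prod_i(z-x_i^{(m)}-\delta_i)\equiv U_m-\theta\pmod{t^{2m}}$; this bypasses the minimal-polynomial computation entirely and removes the dependence on Kedlaya--Umans, which the paper itself flags as the practically delicate ingredient. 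A secondary difference is that you obtain $H(X_m)X_m'$ from the partial-fraction identity $(-\partial_t U_m)\cdot V_m^{-1}/U_m$ expanded at infinity, whereas the paper reaches the same vector through Newton sums and a Hankel matrix--vector product; both routes cost $\softO(mg)$. Two small caveats: your factorisation $H=\widetilde V\cdot\mathrm{diag}(x_i/y_i)$ (and the attendant need to invert $z$ modulo $U_m$) follows the paper's displayed matrix, whose first row appears to contain a typo---the standard basis of holomorphic differentials gives powers $0,\dots,g-1$, so the diagonal factor should be $\mathrm{diag}(1/y_i)$ and the assumption that $U_0(0)$ be a unit is then unnecessary; and the ``bounded number of operations'' should be read as $O(\log m)$ when one accounts for the Hensel lifts of $V_m$ and the modular inverses up to $t$-adic precision $n$, which is of course absorbed by the $\softO$.
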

\begin{proof}[Sketch of the proof]
The algorithm performs the following steps.
\begin{enumerate}
\item Compute the degree $g-1$ polynomial $W_m(t,z) = \sum \limits _{i=0}^{g-1} w_i^{(m)}(t) \,z^i$ such that 
$$W_m(t,z) \equiv 1/f^2(z) \mod (t^m, U_m(t,z))$$
and $W_m(0,z)= 1/V_0(z) \mod U_0(z)$. Observe that it is the
interpolating polynomial of the points:
$$\{ (x_1^{(m)}, 1/y_1^{(m)}) , \cdots , (x_g^{(m)},1/y_g^{(m)})\}.$$
Deduce $V_m(z) = f(z) W_m(z) \mod (t^m , U_m(z))$.\\
\item  Compute the Newton sums $s_i^{(m)}(t) = \sum \limits _{j=1} ^g ({x_j^{(m)}}(t))^i \mod t^m$ for $i=1, \ldots , 2g-1$ and deduce $r_i ^{(m)}(t) = \sum \limits _{j=1} ^g ({x_j^{(m)}(t)})^{i-1}({x_j^{(m)}(t)})' \mod t^m$. \\
\item  Compute the two products $H(X_m(t)) X'_m(t)$ and $H(X_m(t)) X_m(t)$ as follows:
$$
H(X_m(t)) X'_m(t) = 
\begin{pmatrix}
r_1^{(m)} & r_2^{(m)} & \cdots & r_g^{(m)}\\
r_2^{(m)} & r_3^{(m)} &  & r_{g+1}^{(m)} \\
\vdots \\
r_{g}^{(m)} & r_{g+1}^{(m)} & \cdots & r_{2g-1}^{(m)}
\end{pmatrix}
\begin{pmatrix}
w_0^{(m)} \\
w_1^{(m)} \\
\vdots \\
w_{g-1}^{(m)}
\end{pmatrix} \mod t^m 
$$
and 
$$
H(X_m(t)) X_m(t) = 
\begin{pmatrix}
s_1^{(m)} & s_2^{(m)} & \cdots & s_g^{(m)}\\
s_2^{(m)} & s_3^{(m)} &  & s_{g+1}^{(m)} \\
\vdots \\
s_{g}^{(m)} & s_{g+1}^{(m)} & \cdots & s_{2g-1}^{(m)}
\end{pmatrix}
\begin{pmatrix}
w_0^{(m)} \\
w_1^{(m)} \\
\vdots \\
w_{g-1}^{(m)}
\end{pmatrix} \mod t^m
$$ $ $ \\
\item  Compute $(F_1^{(m)}, \cdots , F_g^{(m)}) = H(X_m(t)) X_m(t) - \int (G(t) - H(X_m(t))X'_m(t) ) \, dt$. \\
\item  Let $D_m(t,z) =F_1^{(m)} z^g + F_2^{(m)} z^{g-1} + \ldots + F_{g-1}^{(m)}z^2 + F_g^{(m)}z$. Compute $U_m(t,z)D_m(t,z)= q_{2g}^{(m)}z^{2g} + q_{2g-1}^{(m)}z^{2g-1} + \ldots  + q_0^{(m)} \mod t^m$ and read off the polynomial $Q_m(t,z) =q_{2g}^{(m)}z^{g-1} + q_{2g-1}^{(m)}z^{g-2} + \ldots  + q_{g+1}^{(m)} $.\\
\item Compute $T_m(t,z) = \dfrac{Q_m(t,z) V_m(t,z)}{U'_m(t,z)} \mod (t^m, U_m(t,z))$. \\
\item  Compute $U_n(t,z)$ such that $U_n(t,T_m(t,z)) \equiv 0 \mod (t^m,U_m(t,z))$. \\
$ $ 
\end{enumerate}
We now discuss briefly the complexity analysis.
The polynomial $W_m$ in step 1 can be efficiently computed by the classical Newton scheme for extracting square roots.
Since, the coefficients of $W_m$ and $V_m$ are polynomials of degrees at most $m$ defined over $K$, the complexity of this step is $O(\cM(m)\cM(g))$.
The computation of the Newton sums $s_i^{(m)}$ of $U_m$ in step 2 is 
classical~\cite{NewtonSums} and can be carried out for a cost of 
$O(\cM(m)\cM(g))$ operations. 
In step 3, we are dealing with two Hankel 
matrix-vector products. This can be done in $O(\cM(m)\cM(g))$ operations 
in $K$~\cite[Section~3a]{Canny}. The polynomial $T_m$ constructed in 
step 5 and step 6 interpolates the data $\{ (x_1^{(m)} , x_1^{(n)}) , 
\ldots , (x_g^{(m)} , x_g^{(n)}) \}$ (see \cite[Section~5]{Kaltofen} for 
more details), it can be computed in $O(\cM(m)\cM(g))$ as well. Step 7 
computes $U_n$, the minimal polynomial of $x_1^{(n)}$. We make use of 
Kedlaya-Umans algorithm~\cite{KU11} to execute step 7; the resulting
bit complexity is $\softO(mg)$.
\end{proof}

\bibliographystyle{alphaabbr}
\bibliography{synthbib}

\end{document}